\newtheorem{theorem}{Theorem}[section]
\newtheorem*{theorem*}{Theorem}
\newtheorem{lemma}{Lemma}[section]
\theoremstyle{remark}
\newcommand{\ind}{\mathds{1}}
\DeclareMathOperator{\vol}{vol}
\begin{document}
\title{$\Omega$-bounds for the partial sums of some modified Dirichlet characters II}
\author{Marco Aymone, Ana Paula Chaves, Maria Eduarda Ramos}
\begin{abstract}
A modified Dirichlet character $f$ is a completely multiplicative function such that for some Dirichlet character $\chi$, $f(p)=\chi(p)$ for all but a finite number of primes $p\in S$, and for those exceptional primes $p\in S$, $|f(p)|\leq 1$. If $\chi$ is primitive and for each $p\in S$ we have $|f(p)|=1$, we prove that $\sum_{n\leq x}f(n)=\Omega((\log x)^{(|S|-3)/2})$. This makes progress on a Conjecture due to Klurman, Mangerel, Pohoata and Ter\"av\"ainen, c.f. Trans. Amer. Math. Soc., 374 (2021), pp. 7967--7990. Our proof combines tools from Analytic Number Theory, Harmonic Analysis, Baker's Theory on linear forms in logarithms and Discrepancy bounds for sequences uniformly distributed modulo $1$. 
\end{abstract}

\maketitle

\section{Introduction.}

In this sequel we continue the study on $\Omega$-bounds\footnote{Given functions $M(x)$ and $N(x)$ such that $N(x)>0$ for all $x$, we recall that $M(x)=\Omega(N(x))$ if $\limsup_{x\to\infty}\frac{|M(x)|}{N(x)}>0$.} for the partial sums of a modified Dirichlet character $f$, \textit{i.e.}, $f$ is completely multiplicative and for some Dirichlet character $\chi$ there is a finite set of primes $S$ such that $f(p)=\chi(p)$ for all primes outside $S$, and for the exceptional primes $p\in S$, $f(p)\neq\chi(p)$ and $|f(p)|\leq 1$. In this case, we say that $f$ is a modification of $\chi$ with modification set $S$.

This class of functions occur naturally in the Erd\H{o}s Discrepancy Problem resolved by Tao \cite{taodiscrepancy}. In this resolution, it was proved that a completely multiplicative function $f$ with $|f(n)|=1$ for all $n$ satisfies
$$\sup_x\left|\sum_{n\leq x}f(n)\right|=\infty.$$

Dirichlet characters play an important role in this problem, since the non-principal ones have bounded partial sums and are completely multiplicative. Therefore they are considered near-counterexamples to the Theorem of Tao, where the word near comes from the fact that they vanish at a finite subset of primes, and hence the condition $|f|=1$ is barely satisfied.

In \cite{klurmanteravainen}, Klurman et al. proved that for a modification $f$ of a primitive Dirichlet character with $|S|\geq 1$ such that, for at least one prime $p\in S$ we have $|f(p)|=1$, then the partial sums  satisfies
$$\sum_{n\leq x}f(n)=\Omega(\log x),$$
and thus improving a result of Borwein, Choi and Coons \cite{Borweindicrepancy}. Further, a corrected version of a Conjecture in \cite{klurmanteravainen} states that if $\chi$ is primitive and $|f(p)|=1$ for each prime $p\in S$, then $\sum_{n\leq x}f(n)=\Omega((\log x)^{|S|})$. We will refer to this as Klurman-Mangerel-Pohoata-Ter\"av\"ainen Conjecture.

In the prequel \cite{aymone_modified} to this present paper it was exhibited examples where the above Conjecture holds. Here we make the following progress on this problem. 

\begin{theorem}\label{Teorema principal} Let $f$ be a modification of a primitive Dirichlet character $\chi$ with modification set $S$, where $|f(p)|=1$ for each $p\in S$. Then
$$\sum_{n\leq x}f(n)=\Omega((\log x)^{(|S|-3)/2}).$$
\end{theorem}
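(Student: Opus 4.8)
The plan is to argue by contradiction. Suppose $\sum_{n\le x}f(n)=O((\log x)^A)$ for all large $x$ with $A<(|S|-3)/2$, and write $S_f(x)=\sum_{n\le x}f(n)$, $F(s)=\sum_n f(n)n^{-s}$. On the one hand, the Mellin--Plancherel identity
$$\int_1^\infty\frac{|S_f(x)|^2}{x^{2\sigma}}\,\frac{dx}{x}=\frac1{2\pi}\int_{-\infty}^\infty\frac{|F(\sigma+it)|^2}{\sigma^2+t^2}\,dt\qquad(\sigma>0),$$
valid by analytic continuation from $\sigma>1$ since $\int_1^\infty S_f(x)x^{-s}\tfrac{dx}{x}=F(s)/s$, forces
$$\int_{-\infty}^\infty\frac{|F(\sigma+it)|^2}{\sigma^2+t^2}\,dt=O_A\bigl(\sigma^{-2A-1}\bigr)\qquad(\sigma\to0^+).$$
On the other hand, since $f$ agrees with $\chi$ off $S$ and $\chi$ is primitive (assume it is non-principal, the trivial-character case being immediate: then $F$ has a simple pole at $s=1$, so $\sum_{n\le x}f(n)=\Omega(x)$), we have $F(s)=L(s,\chi)H(s)$ with $L(\cdot,\chi)$ entire and
$$H(s)=\prod_{p\in S}\frac{1-\chi(p)p^{-s}}{1-f(p)p^{-s}}=\sum_{d}\frac{h(d)}{d^s},$$
the last sum over $S$-smooth $d$, with $\sum_d|h(d)|d^{-\sigma}<\infty$ for every $\sigma>0$ and $|h(d)|\asymp_S1$ for $S$-smooth $d$ (here $|f(p)|=1$ and $f(p)\ne\chi(p)$ are used). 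The strategy is to contradict the first display by showing that the right-hand side is $\gg_S\sigma^{-(|S|-2)}$ as $\sigma\to0^+$.

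The key point is that $|H(\sigma+it)|$ is large exactly when $p^{it}$ is simultaneously close to $f(p)$ for all $p\in S$. Writing $f(p)=e^{2\pi i\theta_p}$, and for $0<\delta<1$, $T\ge2$ setting
$$R(T,\delta)=\Bigl\{t\in[T,2T]:\ \bigl\|\tfrac{t\log p}{2\pi}-\theta_p\bigr\|<\delta\ \text{ for all }p\in S\Bigr\},$$
one has, for $t\in R(T,\delta)$ and $\sigma\asymp\delta$, the bounds $|1-f(p)p^{-\sigma-it}|\ll_S\delta$ and $|1-\chi(p)p^{-\sigma-it}|\asymp_S1$, hence $|H(\sigma+it)|^2\gg_S\delta^{-2|S|}$. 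Since on $R(T,\delta)$ one has $\sigma^2+t^2\asymp T^2$, it therefore suffices to exhibit, for each small $\delta$, a height $T=T(\delta)$ with
$$\frac1{T^2}\int_{R(T,\delta)}|L(\sigma+it,\chi)|^2\,dt\ \gg_S\ \delta^{2|S|-2}\qquad(\sigma\asymp\delta),$$
for then $\int\frac{|F|^2}{\sigma^2+t^2}\,dt\ge\frac1{5T^2}\int_{R(T,\delta)}|H|^2|L|^2\,dt\gg_S\sigma^{-(|S|-2)}$.

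Two ingredients produce such a $T$. First, as $\{\log p:p\in S\}$ is $\mathbb{Q}$-linearly independent, the curve $t\mapsto(\tfrac{t\log p}{2\pi})_{p\in S}$ is equidistributed in $(\mathbb{R}/\mathbb{Z})^{|S|}$; quantifying this with the Erd\H{o}s--Tur\'an inequality and Baker's lower bound $|\sum_{p\in S}n_p\log p|\gg_S(\max_p|n_p|)^{-C_S}$ for nonzero integer linear forms yields a power-saving discrepancy, whence $|R(T,\delta)|\gg_S\delta^{|S|}T$ for all $T\ge T_0(\delta)$ with $\log T_0(\delta)\ll_S\log(1/\delta)$ --- crucially sub-exponential in $1/\delta$, so one may keep $T^{-\sigma}\asymp1$ while $\sigma\asymp\delta$. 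Second, one must show that $|L(\sigma+\cdot,\chi)|^2$ attains on the thin Diophantine set $R(T,\delta)$ roughly its mean value $\asymp T^{1-2\sigma}$ on $[T,2T]$: inserting the approximate functional equation, $|L(\sigma+it,\chi)|^2$ becomes essentially $|\sum_{n\le cT}\chi(n)n^{-\sigma-it}|^2$, whose integral over $R(T,\delta)$ has diagonal part $\gg_S|R(T,\delta)|\,T^{1-2\sigma}$, while the off-diagonal part, of the shape $\sum_{m\ne n\le cT}\chi(n)\bar\chi(m)(nm)^{-\sigma}\int_{R(T,\delta)}(m/n)^{it}\,dt$, must be beaten using the fine structure of $R(T,\delta)$ (a union of $\asymp\delta^{|S|-1}T$ intervals of length $\asymp\delta$ spaced $\asymp\delta^{1-|S|}$ apart) to extract cancellation in $\int_{R(T,\delta)}(m/n)^{it}\,dt$ --- which invokes discrepancy bounds for $(t\log p)_{p\in S}$ a second time, now against the frequencies $\log(m/n)$.

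This last step --- a twisted second-moment estimate for $L(s,\chi)$ along the resonance set of $H$ --- is the heart of the argument and the main obstacle; it is precisely the cost of controlling that off-diagonal (together with the error terms of the approximate functional equation at the comparatively small height $T_0(\delta)$) that degrades the heuristic bound $\sigma^{-|S|}$ to $\sigma^{-(|S|-2)}$, hence produces the exponent $(|S|-3)/2$ rather than the full $|S|$ (or even the $(|S|-1)/2$ that a perfect off-diagonal would give). Once $\int\frac{|F(\sigma+it)|^2}{\sigma^2+t^2}\,dt\gg_S\sigma^{-(|S|-2)}$ is established, it contradicts the Plancherel bound $O_A(\sigma^{-2A-1})$ whenever $2A+1<|S|-2$; letting $\sigma\to0^+$ forces $A\ge(|S|-3)/2$, and the standard passage from ``not $O((\log x)^B)$'' to $\Omega((\log x)^B)$ gives $\sum_{n\le x}f(n)=\Omega((\log x)^{(|S|-3)/2})$.
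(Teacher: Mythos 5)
Your overall framework matches the paper's: Plancherel's identity, the factorization $F(s)=H(s)L(s,\chi)$ with $H=E_\chi/E_f$, a Diophantine resonance set on which $|H|$ blows up, and a power-saving discrepancy bound via Baker's theorem and the Erd\H{o}s--Tur\'an inequality. The critical divergence is in how you handle $|L(\sigma+it,\chi)|$ on that resonance set, and it is precisely there that your argument has a genuine gap.

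You propose to show $\int_{R(T,\delta)}|L(\sigma+it,\chi)|^2\,dt$ is comparable to the usual mean square via the approximate functional equation, and you explicitly defer the off-diagonal estimate, calling it ``the heart of the argument and the main obstacle.'' That is exactly right: it is an obstacle, and you have given no mechanism for overcoming it. The set $R(T,\delta)$ is a union of short intervals of total measure $\asymp\delta^{|S|}T$ determined by the Diophantine condition $\|t\log p/(2\pi)-\theta_p\|<\delta$ for all $p\in S$; the off-diagonal sum $\sum_{m\ne n}\chi(n)\bar\chi(m)(mn)^{-\sigma}\int_{R(T,\delta)}(m/n)^{it}dt$ has no obvious cancellation unless one can show that the frequencies $\log(m/n)$ are equidistributed against the fine structure of $R(T,\delta)$ --- and the dangerous case is exactly when $m/n$ is close to an $S$-smooth rational, where $\log(m/n)$ correlates with the very linear forms in $\{\log p\}_{p\in S}$ defining $R(T,\delta)$. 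You then assert, without any computation, that the loss from this off-diagonal (plus the approximate-functional-equation error at height $T_0(\delta)$) is exactly a factor of $\sigma^2$, producing $(|S|-3)/2$; this is reverse-engineered from the theorem statement rather than proved. There is also an arithmetic slip: with $|H|^2\gg\delta^{-2|S|}$ and your stated target $\frac1{T^2}\int_{R(T,\delta)}|L|^2\gg\delta^{2|S|-2}$, you would obtain only $\delta^{-2}$, not $\sigma^{-(|S|-2)}$; the target you actually need is $\gg\delta^{|S|+2}$.

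The paper avoids the second moment entirely by working \emph{pointwise}: restricting $t$ to short intervals $[2\pi n,2\pi n+\sigma]$ with $n\bm\alpha-\bm\theta\bmod1\in[0,10\sigma]^{|S|}$ and $e^{1/\sigma}\le 2\pi n\le e^{10/\sigma}$, and then invoking the functional equation $|L(\sigma+it,\chi)|\asymp|t|^{1/2-\sigma}|L(1-\sigma-it,\overline\chi)|$ together with the Vinogradov--Korobov zero-free region, which gives a pointwise lower bound $|L(1-\sigma-it,\overline\chi)|\gg1/\log|t|$ throughout that range. The $(\log|t|)^{-2}\asymp\sigma^2$ loss in this pointwise bound is the precise and fully justified source of the degradation from the heuristic $\sigma^{-|S|}$ to the proved $\sigma^{-(|S|-2)}$, i.e.\ from exponent $|S|/2$ to $(|S|-3)/2$. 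This pointwise route is both cleaner and complete, whereas your second-moment route as written is an incomplete program rather than a proof.
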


The proof of this result relies in a combination of techniques coming from different branches of Number Theory:
\begin{itemize}
\item Zero free regions, estimates and functional equation for $L(s,\chi)$;
\item Plancherel's identity from Harmonic Analysis;
\item Baker's Theory on linear forms in logarithms;
\item Discrepancy bounds for sequences uniformly distributed modulo $1$.
\end{itemize}

In the following sections we discuss each of these items and how they relate with the proof. Before that we state the current state of an art on the problem of $\Omega$ bounds for the partial sums of modified Dirichlet characters.

Let $T:=|\{p\in S:f(p)=1\}|-|\{p\in S:\chi(p)=1\}|$,
and
$$N:=\begin{cases}\max\{0,T\}, \mbox{ if }\chi(-1)=-1, (\mbox{odd character}),\\\max\{0,T-1\}, \mbox{ if }\chi(-1)=1, (\mbox{even character}).\end{cases}$$
Then Theorem \ref{Teorema principal} combined with the results in \cite{aymone_modified} and \cite{klurmanteravainen} allows us to state the following result.

\begin{theorem} Let $f$ be a modification of a primitive Dirichlet character $\chi$ with modification set $S$, where $|f(p)|=1$ for each $p\in S$. Let $N$ be as above. Then, for $D=\max\{1,N,(|S|-3)/2\}$, we have that
$$\sum_{n\leq x}f(n)=\Omega((\log x)^D).$$
\end{theorem}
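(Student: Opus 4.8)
The plan is short: the displayed estimate is nothing more than the best of three $\Omega$-bounds for $F(x):=\sum_{n\le x}f(n)$, each of which is already available under the present hypotheses, so no new analytic input is needed beyond Theorem~\ref{Teorema principal} itself. First I would assemble the three ingredients. Since $f$ is a modification of $\chi$, the set $S$ is nonempty, hence $|S|\ge 1$, and by assumption $|f(p)|=1$ for every $p\in S$; in particular at least one such prime exists, so the theorem of Klurman, Mangerel, Pohoata and Ter\"av\"ainen from \cite{klurmanteravainen} applies and gives $F(x)=\Omega(\log x)$. Next, the relevant lower bound of the prequel \cite{aymone_modified}, expressed through the quantities $T$ and $N$ defined above, gives $F(x)=\Omega((\log x)^{N})$. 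Finally, Theorem~\ref{Teorema principal} of the present paper gives $F(x)=\Omega((\log x)^{(|S|-3)/2})$.

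Second, I would record the elementary fact that for real numbers $a\ge b$ and all $x\ge e$ one has $(\log x)^{b}\le(\log x)^{a}$, so that $F(x)=\Omega((\log x)^{a})$ forces $F(x)=\Omega((\log x)^{b})$; thus the three estimates above are mutually consistent, and whichever of the exponents $1$, $N$, $(|S|-3)/2$ realises the maximum $D=\max\{1,N,(|S|-3)/2\}$ carries, via the corresponding input, exactly the asserted bound $F(x)=\Omega((\log x)^{D})$. In other words, the statement follows without ever having to merge the three bounds: the maximum is attained by one of them, and that one is already known.

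I do not expect a genuine obstacle at this stage, since all the difficulty is concentrated in Theorem~\ref{Teorema principal} and in the two cited works. The only points needing attention are bookkeeping ones: checking that the hypotheses of \cite{klurmanteravainen} and of the cited result of \cite{aymone_modified} are met verbatim under our standing assumptions (they are, as noted above), and handling the degenerate parameter ranges, namely $|S|\le 3$ (where the exponent $(|S|-3)/2$ is nonpositive and the third bound degenerates to the trivial assertion $\limsup_x|F(x)|>0$) and $N=0$ (where the second bound is likewise trivial). Taking the maximum with $1$ is precisely what guarantees a non-vacuous conclusion in every case, which is why $D\ge 1$ appears in the statement.
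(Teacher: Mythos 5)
Your proposal is correct and takes essentially the same route as the paper, which gives no explicit proof of this theorem beyond the one-line remark that it follows by combining Theorem~\ref{Teorema principal} with the results of \cite{klurmanteravainen} and \cite{aymone_modified}; your fleshing-out of that remark — collecting the three $\Omega$-bounds, noting that the one realising the maximum exponent $D$ already gives the claim, and checking the hypotheses and degenerate ranges — is precisely what is intended.
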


Therefore, examples in which the Klurman-Mangerel-Pohoata-Ter\"av\"ainen Conjecture holds can be obtained by modifying an odd primitive Dirichlet character $\chi$ at primes $p$ in some set $S$ where $\chi(p)\neq 1$ and $f(p)=1$, for all primes $p\in S$.

We conclude this introduction by briefly explaining the next sections. In Section \ref{section notation} we state the most frequent notation used in this paper. In Section \ref{section prova} we break in several Subsections the proof of Theorem \ref{Teorema principal} in order to explain each of the items coming from different branches of Number Theory used in the proof.

\section{Notation}\label{section notation} 
Here is a summary of all frequently used notation throughout this paper.  We hope that it may be useful for the reader always when he or she becomes overloaded with the plenty number of notation used here. Some notation less used we prefer to explain right before it is needed.

\subsection{Letters appearing throughout the text} We will let the letter $p$ to always represent a prime number, $n$ to represent a positive integer, $x$ and $y$ real variables used as the edge of an index of summation. The letter $f$ represents our modified Dirichlet character $\chi$ with set of modifications $S$. The symbol $|S|$ means for the cardinality of $S$. We let bold letters like $\bm{\alpha}$ to represent vectors in $\mathbb{R}^d$.

\subsection{Asymptotic notation} We use the standard Vinogradov notation $f(x)\ll g(x)$ or Landau's $f(x)=O(g(x))$ whenever there exists a constant $c>0$ such that $|f(x)|\leq c|g(x)|$, for all $x$ in a set of parameters. When not specified, this set of parameters is an infinite interval $(a,\infty)$ for sufficiently large $a>0$. Sometimes is convenient to indicate the dependence of this constant in other parameters. For this, we use both $\ll_\delta$ or $O_\delta$ to indicate that $c$ may depends on $\delta$. 

The standard $f(x)=o(g(x))$ means that $f(x)/g(x)\to0$ when $x\to a$, where $a$ could be a complex number or $\pm \infty$. 

The notation $\Omega$ was already explained in the first footnote of this paper.

\section{Proof of the main Theorem}\label{section prova}
We will start now the proof of Theorem \ref{Teorema principal} that will be subdivided in several steps. In the rest of the paper, the modified Dirichlet character $f$ will be fixed, and is assumed that $|f(p)|=1$ for all $p\in S$. It is also assumed throughout the paper that the underlying Dirichlet character $\chi$ is primitive.

\subsection{The Dirichlet Series of $f$} We begin by giving a formula for 
$$F(s):=\sum_{n=1}^\infty \frac{f(n)}{n^s}.$$

We recall that (see \cite{aymoneresemblingmobius} for a proof using a Tauberian result) that
$$\sum_{n\leq x}f(n)\ll (\log x)^{|S|},$$
and hence, by the formula
$$F(s)=s\int_{1}^\infty \frac{\sum_{n\leq x}f(n)}{x^{s+1}}dx,$$
$F(s)$ converges in the half plane $\Re(s)>0$, and has $\sigma_c=0$.

Now we easily see that the Euler product representation for $F(s)$ when $\Re(s)>1$ is given by 
\begin{equation}\label{equation Euler product F}
F(s)=\frac{E_\chi(s)}{E_f(s)}L(s,\chi),
\end{equation}
where 
\begin{equation}\label{equation E_f}
E_f(s):=\prod_{p\in S}(1-f(p)p^{-s}).
\end{equation}

Given the analytic properties of $L(s,\chi)$, we have that the representation \eqref{equation Euler product F} holds in all complex plane except on the zeros of $E_f(s)$. As we will see below, this zeros occur only at $\Re(s)=0$ and are all simple.

\subsection{The argument used in the prequel}\label{secao argumento gamma} In the previous paper \cite{aymone_modified}, it was exploited the fact that if for some $N\geq 0$ and function $M(x)$ we have that
\begin{equation}\label{equation argumento gamma}
\int_{1}^\infty\frac{|M(x)|}{x^{1+\sigma}}dx\gg \frac{1}{\sigma^{N+1}}, \,\sigma\to0^+,
\end{equation}
then 
$$M(x)=\Omega((\log x)^N).$$

Indeed, for $N\geq 0$, after a change of variable, we have that
$$\int_1^\infty \frac{(\log x)^N}{x^{1+\sigma}}dx=\frac{\Gamma(N+1)}{\sigma^{N+1}}.$$
Therefore, if \eqref{equation argumento gamma} holds, then $|M(x)|$ cannot be $o((\log x)^N)$.

\subsection{An Harmonic Analysis result} In this paper we exploit the following formula (Theorem 5.4 of \cite{montgomery_livro})
\begin{equation}\label{equation formula de Plancherel}
\int_{1}^\infty\frac{\left|\sum_{n\leq x}f(n)\right|^2}{x^{1+2\sigma}}dx=\frac{1}{2\pi}\int_{-\infty}^\infty \frac{|F(\sigma+it)|^2}{|\sigma+it|^2}dt,
\end{equation}
known as Plancherel's identity and valid for each $\sigma>\max\{\sigma_c,0\}=0$. 

Indeed, in what follows we will show that
$$\int_{-\infty}^\infty \frac{|F(\sigma+it)|^2}{|\sigma+it|^2}dt\gg \frac{1}{\sigma^{|S|-2}},$$
and hence the main Theorem will follow by the above said in Section \ref{secao argumento gamma}.

\section{The underlying Dynamical System} To make the right side of \eqref{equation formula de Plancherel} large we have to make $|F(\sigma+it)|$ large. The only way to achieve this is, by \eqref{equation Euler product F}, to make $E_f(\sigma+it)$ close to $0$.

Indeed, we see that, by denoting $f(p)=e^{2\pi i \theta_p}$ for some $0\leq \theta_p<1$, we have that $E_f(s)^{-1}$ has only simple poles and of the form
$$s_{z,p}=\frac{2\pi i z}{\log p}+\frac{2\pi i \theta_p}{\log p},\,z\in\mathbb{Z},\,p\in S.$$

Let $S=\{p_1,...,p_{|S|}\}$ and
\begin{align*}
\bm{\alpha}&=(\log p_1,...,\log p_{|S|}),\\
\bm{\theta}&=(\theta_1,...,\theta_{|S|}).
\end{align*}

Given a vector $\bm{x}=(x_1,...,x_d)$ in $\mathbb{R}^d$, we define $\bm{x}\mod 1$ to be the vector in the torus $\mathbb{T}^d=[0,1]^d$ defined as
$$\bm{x}\mod 1=(\{x_1\},...,\{x_d\}),$$
where $\{\cdot\}$ stands for the fractional part of a real number.

\begin{lemma}\label{lemma cota inferior pontos bons} Consider the map $\Lambda:\mathbb{T}^{|S|}\to \mathbb{T}^{|S|}$ given by $\Lambda(\bm{x})=\bm{\alpha}+\bm{x}\mod 1$. Let $\sigma>0$ be small and $0\leq r\leq \sigma$. If $e^{1/\sigma}\leq 2\pi n\leq e^{10/\sigma}$ and $\Lambda^n(-\bm{\theta})=n\bm{\alpha}-\bm{\theta}\mod 1$ belongs to the box $[0,10\sigma]^{|S|}$, then as $\sigma\to0^+$ and uniformly in $n$ we have that
$$\frac{|F(\sigma+2\pi i n+ir)|^2}{|\sigma+2\pi i n+ir|^2}\gg \frac{1}{\sigma^{2|S|}}\frac{1}{n^{1+2\sigma}(\log n)^2}.$$
\end{lemma}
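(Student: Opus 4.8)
The plan is to bound $|F(\sigma+2\pi i n+ir)|^2/|\sigma+2\pi i n + ir|^2$ from below by estimating each of the three factors in the Euler product representation \eqref{equation Euler product F} separately at the point $s=\sigma+2\pi i n + ir$. The denominator $|\sigma+2\pi i n+ir|^2$ is easy: since $e^{1/\sigma}\leq 2\pi n\leq e^{10/\sigma}$ and $0\leq r\leq\sigma$, we have $|\sigma+2\pi i n+ir|\asymp n$, so this contributes a factor $\asymp 1/n^2$ and causes no trouble. For $L(s,\chi)$ we need a lower bound on $|L(\sigma+2\pi i n + ir,\chi)|$; because $t=2\pi n + r$ is large (of size $\asymp n$, hence $\log|t|\asymp 1/\sigma$), I would invoke the classical zero-free region for $L(s,\chi)$ together with the standard lower bound $|L(s,\chi)|\gg 1/\log|t|$ valid slightly to the right of (and on) the line $\Re(s)=0$ away from a possible Siegel zero — but here $\Re(s)=\sigma>0$ is already inside the half-plane of absolute convergence issues, so more carefully one uses that on $\Re(s)\geq \sigma$ with $\sigma$ of size $1/\log|t|$ one still has $|L(s,\chi)|\gg 1/\log|t|\asymp\sigma$. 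Thus $|L(\sigma+2\pi in+ir,\chi)|\gg \sigma$, contributing $\gg\sigma^2$ — a harmless polynomial-in-$\sigma$ loss that will be absorbed into the $1/\sigma^{2|S|}$.

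**Next I would** handle the main term $E_\chi(s)/E_f(s)$, where all the gain comes from. The numerator $E_\chi(s)=\prod_{p\in S}(1-\chi(p)p^{-s})$ is bounded above and below by absolute constants on $\Re(s)=\sigma$ (each factor is $1+O(p^{-\sigma})$ and, crucially, is bounded away from $0$ since $\chi(p)p^{-\sigma}$ cannot be too close to $1$ in modulus for $\sigma$ small — or if $\chi(p)=0$ the factor is exactly $1$), so it contributes $\asymp 1$. The denominator is the heart: for each $p\in S$, write $f(p)=e^{2\pi i\theta_p}$ and estimate
$$|1-f(p)p^{-(\sigma+2\pi i n + ir)}| = |1 - e^{2\pi i\theta_p} e^{-\sigma\log p} e^{-2\pi i n\log p} e^{-ir\log p}|.$$
The phase of $f(p)p^{-2\pi i n}$ is $2\pi(\theta_p - n\log p)$, i.e. $2\pi$ times the $p$-th coordinate of $n\bm\alpha - \bm\theta$; by hypothesis $\Lambda^n(-\bm\theta) = n\bm\alpha-\bm\theta\bmod 1\in[0,10\sigma]^{|S|}$, so $\{n\log p - \theta_p\}\in[0,10\sigma]$ (up to the sign convention, which I would reconcile — one may replace $\bm\theta$ by $-\bm\theta$ or use that the box is symmetric enough for the modulus bound). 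Hence the phase is within $O(\sigma)$ of a multiple of $2\pi$, the modulus $e^{-\sigma\log p}=1+O(\sigma)$, and the extra rotation $e^{-ir\log p}$ has phase $O(r\log p) = O(\sigma\log p) = O(\sigma)$ (since $\log p$ is a fixed constant, being $p\in S$ finite). Therefore $1 - f(p)p^{-s} = O(\sigma)$ for every $p\in S$, giving $|E_f(s)|\ll\sigma^{|S|}$ and so $|E_\chi(s)/E_f(s)|^2\gg 1/\sigma^{2|S|}$.

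**Putting the pieces together:** multiplying the three contributions,
$$\frac{|F(\sigma+2\pi i n + ir)|^2}{|\sigma+2\pi i n + ir|^2} \gg \frac{1}{\sigma^{2|S|}}\cdot \sigma^2 \cdot \frac{1}{n^2},$$
and I would then re-examine the $L$-factor lower bound to see whether the exponent is $n^{2}$ or $n^{2+2\sigma}(\log n)^2$ — in fact the sharper classical bound near the line, $|L(s,\chi)|\gg 1/\log|t|$ with $\log|t|\asymp\log n$, gives $\sigma^2$ replaced by $1/(\log n)^2$, and writing $1/n^2 = 1/n^{1+2\sigma}\cdot n^{2\sigma - 1}\cdot n = \dots$ one reconciles to the stated $\frac{1}{\sigma^{2|S|}}\frac{1}{n^{1+2\sigma}(\log n)^2}$ (the factor $n^{2\sigma}$ is bounded since $2\sigma\log n\leq 20$, and any such bounded factors get absorbed).

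**The main obstacle** I anticipate is the lower bound for $|L(\sigma+2\pi i n+ir,\chi)|$: one must ensure there is no zero of $L(\cdot,\chi)$ dangerously close to $\sigma+2\pi i n + ir$. Since $\sigma \asymp 1/\log n$ and the classical zero-free region for $L(s,\chi)$ has width $\asymp 1/\log(q|t|)\asymp 1/\log n$ near height $|t|\asymp n$, the point $\sigma + it$ with $\sigma$ comparable to this width is on the boundary of the safe region, so one needs the effective form of the zero-free region (and the absence of an exceptional real zero, which for large $|t|$ is not an issue) to push $|L|\gg 1/\log n$; this is where the "zero free regions, estimates and functional equation for $L(s,\chi)$" input from the introduction is used. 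The uniformity in $n$ of all error terms — particularly that the implied constants in $1-f(p)p^{-s}=O(\sigma)$ do not depend on $n$ — follows immediately because $S$ is a fixed finite set, so $\log p$ and the phases $\theta_p$ are fixed constants; the only $n$-dependence enters through $\sigma\asymp 1/\log n$ and $r\leq\sigma$, both of which are tracked explicitly.
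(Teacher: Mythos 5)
Your overall decomposition into the three factors $E_\chi$, $1/E_f$, $L$ is the same as the paper's, and your treatment of $E_f$ (phases within $O(\sigma)$ of integers forces each factor $\ll\sigma$) matches the paper. However, there is a genuine and decisive gap in the $L$-factor.

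You attempt to bound $|L(\sigma+2\pi i n+ir,\chi)|$ directly, invoking ``$|L(s,\chi)|\gg 1/\log|t|$ slightly to the right of the line $\Re(s)=0$.'' That is not a standard fact: the classical lower bound $|L(s,\chi)|\gg 1/\log|t|$ lives near $\Re(s)=1$, where the zero-free region sits. The point $\sigma+2\pi i n+ir$ has real part $\sigma\asymp 1/\log n$, i.e.\ it is near $\Re(s)=0$, on the opposite edge of the critical strip. To reach it one must first reflect via the functional equation: for primitive $\chi$ and the $\Gamma$-factor estimates one has $|L(\sigma+2\pi i n+ir,\chi)|\gg n^{1/2-\sigma}\,|L(1-\sigma-2\pi i n-ir,\overline\chi)|$, and only the value at $1-\sigma-2\pi i n-ir$ (near $\Re(s)=1$) is controlled by the zero-free region. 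This missing reflection is not merely an omitted technicality: it contributes the polynomial factor $n^{1/2-\sigma}$, squared to $n^{1-2\sigma}$, which is exactly what converts your denominator $n^2$ into the lemma's $n^{1+2\sigma}$. Your attempted ``reconciliation'' at the end does not work --- $1/n^{2}$ and $1/n^{1+2\sigma}$ differ by $n^{1-2\sigma}$, which in the range $e^{1/\sigma}\le 2\pi n\le e^{10/\sigma}$ tends to infinity and is not absorbable, unlike the harmless $n^{2\sigma}\le e^{20}$. So your final bound is weaker than the lemma by a power of $n$.

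A secondary slip: your claim that $E_\chi(s)$ is ``bounded above and below by absolute constants on $\Re(s)=\sigma$'' is false for general $t$ (a factor $1-\chi(p)p^{-s}$ can be $O(\sigma)$ when $t\log p$ is near $2\pi\mathbb{Z}$). The correct argument, as in the paper, uses the hypothesis on $n$ together with the fact that $f(p)\neq\chi(p)$ for $p\in S$: the phase of $\chi(p)p^{-2\pi i n}$ is forced to lie near $\lambda_p-\theta_p\neq 0\ (\mathrm{mod}\ 1)$, so each factor of $E_\chi$ is bounded away from $0$ precisely at these special $n$. The modulus-based heuristic you give does not prove this.
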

\begin{proof} Under the hypothesis that $n\bm{\alpha}-\bm{\theta}\mod 1$ belongs to the box $[0,10\sigma]^{S}$, we have that for each $p\in S$,
$$n\log p-\theta_p=m+O(\sigma),$$
for some integer $m=m(n,p)$. 

Hence, 
$$1-f(p)p^{-\sigma}p^{-2\pi i n}p^{-ir}=1-p^{O(\sigma)}e^{-2\pi i (n \log p -\theta_p)}=1-p^{O(\sigma)}\ll\sigma.$$
Therefore, 
$$\frac{1}{E_f(\sigma+2\pi i n+ir)}\gg \frac{1}{\sigma^{|S|}},\,\sigma\to0^+.$$

On the other hand, for $p\in S$, denoting $\chi(p)\neq 0$ by $e^{2\pi i\lambda_p}$ for some $0\leq \lambda_p<1$, we have that
\begin{align*}
1-\chi(p)p^{-\sigma}p^{-2\pi i n}p^{-ir}&=1-p^{O(\sigma)}e^{-2\pi i (n \log p -\theta_p)+2\pi i (\lambda_p-\theta_p)}\\
&=1-p^{O(\sigma)}e^{2\pi i (\lambda_p-\theta_p)}\\
&=(1-e^{2\pi i (\lambda_p-\theta_p)}+O(\sigma))\\
&\gg 1,
\end{align*}
since $0<|\lambda_p-\theta_p|<1$. In the case that $\chi(p)=0$, the lower bound above is trivial.

Therefore, 
$$E_\chi(\sigma+2\pi i n+ir)\gg 1.$$

Now, by the functional equation for $L(s,\chi)$ ($\chi$ primitive) and classical estimates for the $\Gamma$ function (see Corollary 10.10 of \cite{montgomery_livro}), we have that
$$L(\sigma+2\pi i n+ir,\chi)\gg n^{1/2-\sigma}|L(1-\sigma-2\pi i n-ir,\overline{\chi})|.$$

The Vinogradov-Korobov zero free region for $L$-functions states that they are free of zeros in the region of points $s=\sigma+it$ such that $|t|\geq 1$ and
$$\sigma>1-\frac{c_0}{(\log (10+|t|))^{2/3+\epsilon}},$$
 where $c_0$ is a positive constant and $\epsilon>0$ is small and fixed. Here we refer to \cite{khale} for explicit and more accurate results on this zero free region. Replacing $\sigma$ by $1-\sigma$ above,
we obtain a zero free region for points of the form $s=1-\sigma+it$ such that $0<\sigma\leq \sigma_0$ for some $\sigma_0>0$ and
\begin{equation}\label{equation zero free region}
1\leq |t|\ll e^{c_1/\sigma^{1.4}},
\end{equation}
where $c_1$ is a positive constant.

By doing standard Complex Analysis as in Theorem 11.4 of \cite{montgomery_livro}, inside a region of the type \eqref{equation zero free region} with $c_1/2$ in place of $c_1$, we have that for all $|t|$ sufficiently large,
$$L(s,\chi)\gg \frac{1}{\log |t|},$$
and hence, in our case in which $e^{1/\sigma}\leq 2\pi n\leq e^{10/\sigma}$, we have that for all $\sigma>0$ sufficiently small
$$|L(1-\sigma-2\pi i n-ir,\overline{\chi})|\gg \frac{1}{\log n},$$
and this completes the proof. \end{proof}

\subsection{Linear forms in Logarithms}

We see from above that giving good lower bounds for $F(\sigma+it)$ is related with the Diophantine properties of 
$$\bm{\alpha}=(\log p_1,...,\log p_{|S|}).$$
 
From now on, $\bm{\alpha}$ will denote the vector above. The study of Diophantine properties of logarithms of algebraic numbers have a long history. Here we will need a consequence from Baker's Theory on linear forms in logarithms \cite{baker_I_II_III}. Indeed, if $\bm{m}=(m_0,...,m_{|S|})$ is a vector in $\mathbb{Z}^{|S|+1}$, then, since the coordinates of $\bm{\alpha}$ are linearly independent over $\mathbb{Q}$, we have that
\begin{equation}\label{equation baker}
|m_0+m_1\log p_1+...+m_{|S|}\log p_{|S|}|\geq \frac{c_1}{|\bm{m}|_{|S|+1}^\kappa},
\end{equation}
where $\cdot$ stands for the usual inner product on $\mathbb{R}^d$, $c_1$ and $\kappa$ are positive constants that depend only on the primes in $S$, and
$$|\bm{m}|_{|S|+1}=\max\{|m_j|:0\leq j\leq |S|\}.$$

The inequality \eqref{equation baker} has been extracted from the book \cite{waldschimdt_book}, pg 10, where the author refer to the paper of Fel'dman \cite{feldman_cota_inferior}.
 
\subsection{Discrepancy of uniformly distributed sequences modulo 1} To simplify notation, let $|S|=d$. Another consequence from the fact that the coordinates of $\bm{\alpha}$ are linearly independent over $\mathbb{Q}$ is that $(n\bm{\alpha}-\bm{\theta}\mod 1:n\geq 1)$ is uniformly distributed on the torus $\mathbb{T}^d$.

A natural question that arises in the Theory of uniformly distributed sequences modulo 1 is quantitative results of how it happens this uniform distribution. Given $x\geq 1$, the Discrepancy $D_x^*$ of a sequence $(\bm{x_n})_n$ in the torus $\mathbb{T}^d$ is a way to measure this approximation to the uniform distribution, and is defined by
$$D_x^*:=\sup_J\left|\frac{1}{x}\sum_{n\leq x}\ind_J(\bm{x_n})-\vol(J)\right|,$$
where $\vol$ denotes the Lebesgue measure in $\mathbb{R}^d$, and $J$ runs over all sets of the form 
$$\{\bm{x}\in\mathbb{R}^d:0\leq x_j\leq \beta_j,\,1\leq j\leq d\},$$
for $\bm{\beta}=(\beta_1,...,\beta_d)\in\mathbb{T}^d$.

Note that if $J$ is any set of the form above that could even depend on $x$, we have that 
\begin{equation}\label{equation auxiliar discrepancy}
\sum_{n\leq x}\ind_J(\bm{x_n})=\vol(J)x+O(xD_x^*).
\end{equation}

In general, it can be shown that $xD_x^*=o(x)$ if $(\bm{x_n})_n$ is uniformly distributed modulo $1$, see \cite{kuipers}.

An upper bound for this Discrepancy is given by the following $d$-dimensional Erd\H{o}s-Turán inequality due independently to Koksma \cite{koksma} and to Sz\"usz \cite{szusz} (see also the notes in the chapter on Discrepancy of the book \cite{kuipers}). By denoting 
$$|\bm{m}|_d=:\max\{|m_j|:1\leq j\leq d\},$$ and 
$$r(\bm{m})=\prod_{1\leq j\leq d}\max\{1,|m_j|\},$$ we have that
$$D_x^*\leq C_d\left(\frac{1}{y}+\frac{1}{x}\sum_{0<|\bm{m}|_d\leq y}\frac{1}{r(\bm{m})}\left|\sum_{n\leq x}e^{2\pi i \bm{m}\cdot \bm{x_n}}\right|\right),$$
where $\bm{m}$ runs over points of $\mathbb{Z}^d$ and $\cdot$ is the standard inner product on $\mathbb{R}^d$, and $C_d$ is a positive constant that depends only in $d$.

The following Lemma is a pivotal part of our proof.

\begin{lemma}\label{lemma discrepancy} Let $\bm{x_n}=n\bm{\alpha}-\bm{\theta}\mod 1$. Then the related Discrepancy satisfies
$$D_x^*\ll \frac{1}{x^{\delta}},$$
for some absolute positive $\delta$.
\end{lemma}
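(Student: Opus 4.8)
The plan is to estimate $D_x^*$ through the $d$-dimensional Erd\H{o}s--Tur\'an inequality recorded above, for which one needs good bounds on the exponential sums $\sum_{n\le x}e^{2\pi i\bm{m}\cdot\bm{x_n}}$. Since $\bm{x_n}=n\bm{\alpha}-\bm{\theta}\bmod 1$, we have $\bm{m}\cdot\bm{x_n}=n(\bm{m}\cdot\bm{\alpha})-\bm{m}\cdot\bm{\theta}$, so up to the unimodular factor $e^{-2\pi i\bm{m}\cdot\bm{\theta}}$ this is the geometric sum $\sum_{n\le x}e^{2\pi i n(\bm{m}\cdot\bm{\alpha})}$. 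Writing $\|t\|$ for the distance from $t\in\mathbb{R}$ to the nearest integer and summing the progression, one gets
$$\left|\sum_{n\le x}e^{2\pi i\bm{m}\cdot\bm{x_n}}\right|\le\min\left\{x,\frac{1}{2\|\bm{m}\cdot\bm{\alpha}\|}\right\},$$
which is meaningful because $\|\bm{m}\cdot\bm{\alpha}\|\ne 0$ for every $\bm{m}\ne\bm{0}$, by the $\mathbb{Q}$-linear independence of $\log p_1,\dots,\log p_d$.

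The crucial input is Baker's bound \eqref{equation baker}, used to prevent $\|\bm{m}\cdot\bm{\alpha}\|$ from being too small. Suppose $0<|\bm{m}|_d\le y$. Then $|\bm{m}\cdot\bm{\alpha}|\le y\sum_{j=1}^d\log p_j$, so the integer $m_0$ closest to $\bm{m}\cdot\bm{\alpha}$ satisfies $|m_0|\ll y$ and hence $|\bm{m}|_{d+1}=\max\{|m_0|,|\bm{m}|_d\}\ll y$. Applying \eqref{equation baker} to the vector $(m_0,m_1,\dots,m_d)\in\mathbb{Z}^{d+1}$ gives
$$\|\bm{m}\cdot\bm{\alpha}\|=|m_0+m_1\log p_1+\cdots+m_d\log p_d|\ge\frac{c_1}{|\bm{m}|_{d+1}^{\kappa}}\gg\frac{1}{y^{\kappa}},$$
with implied constant depending only on the primes in $S$. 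Consequently each exponential sum with $0<|\bm{m}|_d\le y$ is $\ll y^{\kappa}$, and since $\sum_{0<|\bm{m}|_d\le y}r(\bm{m})^{-1}\ll(\log y)^d$ (each of the $d$ coordinates contributing a factor $\ll\log y$), the Erd\H{o}s--Tur\'an inequality yields
$$D_x^*\ll\frac{1}{y}+\frac{y^{\kappa}(\log y)^d}{x}.$$

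Finally I would optimize in $y$: taking $y=x^{1/(\kappa+2)}$ makes the first term equal to $x^{-1/(\kappa+2)}$ and the second $\ll x^{-2/(\kappa+2)}(\log x)^d$, which is $\ll x^{-1/(\kappa+2)}$ for all large $x$. This gives $D_x^*\ll x^{-\delta}$ with $\delta=1/(\kappa+2)$ (indeed any $\delta<1/(\kappa+1)$ works), and since $c_1$, $\kappa$ and $d=|S|$ are fixed, $\delta$ and the implied constant depend only on $S$, as required. The genuine content of the argument is Baker's theorem: the polynomial lower bound $\|\bm{m}\cdot\bm{\alpha}\|\gg y^{-\kappa}$ is exactly what upgrades the qualitative equidistribution $D_x^*=o(1)$ to a power saving; once this is in hand, the remaining steps are routine manipulations of the parameters $y$, $\kappa$ and $d$.
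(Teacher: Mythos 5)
Your proof is correct and follows essentially the same route as the paper: geometric-sum bound on the Weyl sums, Baker's lower bound to control $\|\bm{m}\cdot\bm{\alpha}\|$, insertion into the $d$-dimensional Erd\H{o}s--Tur\'an inequality, and optimization in $y$ to obtain $\delta=1/(\kappa+2)$. You are in fact slightly more careful than the paper at one point, explicitly justifying that the nearest integer $m_0$ satisfies $|m_0|\ll|\bm{m}|_d$ so that the $(d+1)$-dimensional norm in \eqref{equation baker} can be replaced by $|\bm{m}|_d$.
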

\begin{proof}
We have that
$$\left|\sum_{n\leq x}e^{2\pi i \bm{m}\cdot (n\bm{\alpha}-\bm{\theta})}\right|=\left|\sum_{n\leq x}(e^{2\pi i \bm{m}\cdot\bm{\alpha}})^{n}\right|\leq \frac{2}{|e^{2\pi i \bm{m}\cdot\bm{\alpha}}-1|}\ll\frac{1}{\|\bm{m}\cdot\bm{\alpha}\|},$$
where $\|\cdot\|$ stands for the distance to the nearest integer. By \eqref{equation baker}, we have that 
$$\|\bm{m}\cdot\bm{\alpha}\|\gg \frac{1}{|\bm{m}|_d^\kappa}.$$
Combining this with the above $d$-dimensional Erd\H{o}s-Turán inequality, we obtain that
$$D_x^*\ll \frac{1}{y}+\frac{1}{x}\sum_{0<|\bm{m}|_d\leq y} \frac{|\bm{m}|_d^\kappa}{r(\bm{m})}\ll\frac{1}{y}+\frac{y^{\kappa}}{x}(\log y)^d\ll \frac{1}{y}+\frac{y^{\kappa+1}}{x}.$$
Optimizing the last expression above by making $y^{-1}=y^{\kappa+1}x^{-1}$, we obtain that $y=x^{1/(\kappa+2)}$ and hence the Lemma works with
$\delta=1/(\kappa+2)$.
\end{proof}

\subsection{Completion of the proof} We collect now all the arguments above and proceed with:
\begin{proof}[Proof of Theorem \ref{Teorema principal}] We let $Q_T$ to be the set of $n$ such that $n\bm{\alpha}-\bm{\theta}$ modulo $1$ lies in the cube $[0,1/\log T]^{|S|}$, for $T>0$ to be chosen shortly. We let $Q_T(x)$ to be $|\{n\leq x: n\in Q_T\}|$. 

By Lemma \ref{lemma discrepancy} combined with \eqref{equation auxiliar discrepancy}, we have that
$$Q_T(x)=\frac{x}{(\log T)^{|S|}}+O(x^{1-\delta}),$$
where the $O$-term above is uniform on $T$ and $x$, and $\delta>0$ is a constant.

Now, we lower bound:
$$\int_{-\infty}^\infty \frac{|F(\sigma+it)|^2}{|\sigma+it|^2}dt\geq \sum_{n\in Q_T}\int_{2\pi n}^{2\pi n+\sigma}\frac{|F(\sigma+it)|^2}{|\sigma+it|^2}dt.$$

By restricting even more our range to $n\in Q_T$ and $T\leq 2\pi n\leq T^5$, where now we choose $T=e^{1/\sigma}$, we have that Lemma \ref{lemma cota inferior pontos bons} is applicable and hence

$$\int_{-\infty}^\infty \frac{|F(\sigma+it)|^2}{|\sigma+it|^2}dt\gg \frac{\sigma}{\sigma^{2|S|}}\sum_{\substack{T\leq 2\pi n\leq T^5\\n\in Q_T}}\frac{1}{n^{1+2\sigma}(\log n)^2}.$$

Now we do Riemann-Stieltjes integration:

$$\sum_{\substack{T \leq 2\pi n \leq T^5 \\ n\in Q_T}} \frac{1}{n^{1+2\sigma}(\log n)^2}=\int_{T/2\pi}^{T^5/2\pi}\frac{dQ_T(x)}{x^{1+2\sigma}(\log x)^2}.$$

Since 
$$\frac{Q_T(T/2\pi)}{T^{1+2\sigma}(\log T)^2}\ll \sigma^{|S|+2},$$
and 
$$\frac{Q_T(T^5/2\pi)}{(T^5)^{1+2\sigma}(\log T^5)^2}\ll \sigma^{|S|+2},$$
we obtain that
\begin{align*}
\sum_{\substack{T\leq 2\pi n\leq T^5\\n\in Q_T}}\frac{1}{n^{1+2\sigma}(\log n)^2}&=O(\sigma^{|S|+2})
+(1+2\sigma)\int_{T/2\pi}^{T^5/2\pi} \frac{Q_T(x)}{x^{2+2\sigma}}\frac{(1+o(1))}{(\log x)^2}dx\\
&:=O(\sigma^{|S|+2})+J_1.
\end{align*}

Since for $x\geq T/2\pi$ we have $Q_T(x)\gg x/(\log T)^{|S|}$, we obtain that the last integral $J_1$ is lower bounded by
\begin{align*}
J_1&\gg \frac{1}{(\log T)^{|S|}}\int_{T/2\pi}^{T^5/2\pi} \frac{1}{x^{1+2\sigma}(\log x)^2}dx\\
&\geq \frac{1}{(\log T)^{|S|}(\log T^5)^2}\int_{T/2\pi}^{T^5/2\pi} \frac{1}{x^{1+2\sigma}}dx\\
&\gg \sigma^{|S|+2}\left(\frac{1}{\sigma T^{2\sigma}}-\frac{1}{\sigma T^{10\sigma}}\right)\\
&\gg \sigma^{|S|+1}(e^{-2}-e^{-10})\\
&\gg \sigma^{|S|+1}.
\end{align*}

Thus we conclude that
$$\int_{-\infty}^\infty \frac{|F(\sigma+it)|^2}{|\sigma+it|^2}dt\gg \frac{1}{\sigma^{|S|-2}},$$
and hence, by Plancherel's identity that
$$\int_1^\infty \frac{\left|\sum_{n\leq x}f(n)\right|^2}{x^{1+2\sigma}}dx\gg \frac{1}{\sigma^{|S|-2}}.$$
By the argument in Section \ref{secao argumento gamma}, this means that
$$\left|\sum_{n\leq x}f(n)\right|^2=\Omega((\log x)^{|S|-3}),$$
and this completes the proof.
\end{proof}

\noindent\textbf{Acknowledgements.} The research of the first author is funded by FAPEMIG grant Universal APQ-00256-23 and by CNPq grant Universal 403037/2021-2. The third author is funded by CAPES scholarship, process  88887.130630/2025-00.

\bibliographystyle{siam}
\bibliography{ay.bib}

\newpage

{\small{\sc \noindent
Departamento de Matem\'atica, Universidade Federal de Minas Gerais, Av. Ant\^onio Carlos, 6627, CEP 31270-901, Belo Horizonte, MG, Brazil.} \\
\textit{Email address:} \verb|aymone.marco@gmail.com|}

{\small{\sc \noindent
Instituto de Matem\'atica e Estat\'istica, Universidade Federal de Goi\'as, Campus Samambaia, Rua Jacarand\'a, CEP: 74001-970, Goi\^ania - GO, Brazil.} \\
\textit{Email address:} \verb|apchaves@ufg.br|}

{\small{\sc \noindent
Departamento de Matem\'atica, Universidade Federal de Minas Gerais, Av. Ant\^onio Carlos, 6627, CEP 31270-901, Belo Horizonte, MG, Brazil.} \\
\textit{Email address:} \verb|madu-ramos@ufmg.br|}

\end{document}